\numberwithin{equation}{section}
\newtheorem{Theorem}{Theorem}[section]
\newtheorem{Lemma}[Theorem]{Lemma}
\theoremstyle{remark}
\newtheorem{Definition}[Theorem]{Definition}
\newtheorem{Example}[Theorem]{Example}
\begin{document}

\title{On the existance of an ultra central approximate identity for certain semigroup algebras}

\author[A. Sahami]{A. Sahami}

\email{amir.sahami@aut.ac.ir}

\author[I. Almasi]{I. Almasi}

\email{i.almasi@aut.ac.ir}

\address{Department of Mathematics
Faculty of Basic Sciences Ilam University P.O. Box 69315-516 Ilam,
Iran.}

\keywords{Semigroup algebras, ultra central approximate identity, Locally finite inverse semigroup.}

\subjclass[2010]{Primary 46H05, 43A20, Secondary 20M18.}

\maketitle
%-------------------------------------------------------------

%%%%%%%%%%%%%%%%%%%%%%%%%%%%%%%%%%%%%%%%%%%%%%%%%%%%%%%%%%%%%%%%%%%%%%%%%
\begin{abstract}
In this paper we characterize  the existance of an ultra central approximate identity for
$\ell^{1}(S)$, where $S$ is a uniformly locally finite inverse
semigroup. As an application, for   the  Brandt semigroup $S=M^{0}(G,I)$ over a
non-empty set $I$, we show that $\ell^{1}(S)$ has an ultra central approximate identity  if
and only if  $I$ is finite.
\end{abstract}
\section{Introduction and Preliminaries}
\begin{Definition}
Let $A$ be a Banach algebra. We say that $A$ has an ultra central approximate identity if there exists a net $(e_{\alpha})$ in $A^{**}$ such that $ae_{\alpha}=e_{\alpha}a$ and $e_{\alpha}a\rightarrow a,$ for every $a\in A.$
\end{Definition}
It is easy to see that every Banach algebra $A$ with central approximate identity has an ultra central approximate identity. 
We will see that every Banach algebra with a bounded approximate has an ultra central approximate identity but the converse is not always true. 
We will prove that every  Banach algebra with a bounded approximate identity or central approximate identity has an ultra central approximate  identity. Thus the class of Banach algebras which has an ultra central approximate identity is abundant. In fact it is well-known that for a locally compact group $G$, $L^{1}(G)$   has a bounded approximate identity. Also using the main result of \cite{kot} we know that  $S^{1}(G)$(the Segal algebra with respect to a locally compact group $G$)  has a central approximate identity if and only if $G$ is a $SIN$ group. 

 Recently 
Ramsden in \cite[Proposition 2.9]{rams} has been showed that if a semigroup algebra $\ell^{1}(S)$ has a bounded approximate identity, then the set of idempotent elements of $S$, say $E(S)$, is finite, provided that $S$ is an uniformly locally finite semigroup. In fact, he gave a relation between the topological notion of bounded approximate identity and  the algebric notion of idempotent set. So the following question raised

"{\it {\bf What will happen if $\ell^{1}(S)$ has an ultra central approximate identity}}?" 

Since the structure of  the uniformly locally finite inverse semigroup algebra  is related to some group algebras, we answer this question for the semigroup algebras associated to an  uniformly locally finite  inverse semigroups.
In fact (motivated by \cite[Example 4.1(iii)]{sah})we show that  $M_{\Lambda}(\mathbb{C})$ (the Banach algebra of $\Lambda\times
\Lambda $-matrices over $\mathbb{C}$, with finite $\ell^{1}$-norm and
matrix multiplication) has an ultra central approximate identity if and only if $\Lambda$ is finite. Using this tool
 we characterize  the existance of  an ultra central approximate identity for the  semigroup algebra $\ell^{1}(S)$, provided that $S$ is an uniformly locally finite semigroup. As an application, we show that $\ell^{1}(S)$ has an ultra central approximate identity  if
and only if  $I$ is finite, where  $S=M^{0}(G,I)$ is the Brandt semigroup over a
non-empty set $I$.

First we present  some standard notations and definitions that we
shall need in this paper. Let $A$ be a Banach algebra. If $X$ is a
Banach $A$-bimodule, then  $X^{*}$ is also a Banach $A$-bimodule via
the following actions
$$(a\cdot f)(x)=f(x\cdot a) ,\hspace{.25cm}(f\cdot a)(x)=f(a\cdot x ) \hspace{.5cm}(a\in A,x\in X,f\in X^{*}). $$
Let $A$ and $B$ be   Banach algebras. The projective tensor product
 $A\otimes_{p}B$ with the following multiplication is a Banach algebra
$$(a_{1}\otimes b_{1})(a_{2}\otimes b_{2})=a_{1}a_{2}\otimes b_{1}b_{2}\quad (a_{1},a_{2}\in A, b_{1}b_{2}\in B).$$
 The product morphism
$\pi_{A}:A\otimes_{p}A\rightarrow A$ is  specified by
$\pi_{A}(a\otimes b)=ab$ for every $a,b\in A$.

Let $A$ be a Banach algebra and  let $\Lambda$ be a non-empty set.
We denote $\varepsilon_{i,j}$ for a matrix belongs to
$\mathbb{M}_{\Lambda}(A)$ which $(i,j)$-entry is 1 and 0 elsewhere.
The map $\theta:\mathbb{M}_{\Lambda}(A)\rightarrow A\otimes_{p}
\mathbb{M}_{\Lambda}(\mathbb{C})$ defined by
$\theta((a_{i,j}))=\sum_{i,j}a_{i,j}\otimes \varepsilon_{i,j}$ is an
isometric algebra isomorphism.

We present  some notions of semigroup theory, for the further
background see \cite{how}. Let $S$ be a semigroup and let $E(S)$ be
the set of its idempotents. There exists a  partial order on $E(S)$ which is defined by
$$s\leq t\Longleftrightarrow s=st=ts\quad (s,t\in E(S)).$$ A  semigroup $S$ is called inverse semigroup, if for every $s\in S$
there exists $s^{*}\in S$ such that $ss^{*}s=s^{*}$ and
$s^{*}ss^{*}=s$. If $S$ is an inverse semigroup,
then there exists a partial order on $S$ which  coincides with the
partial order on $E(S)$. Indeed
$$s\leq t\Longleftrightarrow s=ss^{*}t\quad (s,t\in
S).$$ For every  $x\in S$, we denote $(x]=\{y\in S|\,y\leq x\}$. $S$
is called locally finite (uniformly locally finite) if for each
$x\in S$, $|(x]|<\infty\,\,(\sup\{|(x]|\,:\,x\in S\}<\infty)$,
respectively.

Suppose that $S$ is an inverse semigroup. Then  the maximal subgroup
of $S$ at $p\in E(S)$ is denoted by $G_{p}=\{s\in
S|ss^{*}=s^{*}s=p\}$.

 Let $S$ be an inverse semigroup. There exists an equivalence relation $\mathfrak{D}$ on $S$
such that $s\mathfrak{D}t$ if and only if there exists $x\in S$ such
that $ss^{*}=xx^{*}$ and $t^{*}t=x^{*}x$. We denote
$\{\mathfrak{D}_{\lambda}:\lambda\in \Lambda\}$ for the collection
of $\mathfrak{D}$-classes and $E(\mathfrak{D}_{\lambda})=E(S)\cap
\mathfrak{D}_{\lambda}.$

%------------------------------------------------------------------------------------------------------------------------------------------
%%%%%%%%%%%%%%%%%%%%%%%%%%%%%%%%%%%%%%%%%%%%%%%%%%%%%%%%%%%%%%%%%%%%%%%%%%%%%%%%%%%%%%%%%%%%%%%%%%%%%%%%%%%%%%%%%%%%%%%%%%%%%%%%%%%%%%%%%%%
%------------------------------------------------------------------------------------------------------------------------------------------
\section{Main results}
\begin{Theorem}\label{matris}
Let $\Lambda$ be any non-empty set. Then
$\mathbb{M}_{\Lambda}(\mathbb{C})$  has an ultra central approximate identity if
and only if $\Lambda$ is finite.
\end{Theorem}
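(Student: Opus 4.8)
The plan is to handle the two directions separately, with the interesting content in the "only if" direction.

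First, the easy direction: if $\Lambda$ is finite, then $\mathbb{M}_\Lambda(\mathbb{C})$ is a finite-dimensional algebra with identity, namely the matrix $\sum_{i\in\Lambda}\varepsilon_{i,i}$. Any unital Banach algebra has a (one-element) central approximate identity, hence — by the general fact announced in the introduction that Banach algebras with a central approximate identity have an ultra central approximate identity — we are done. So this direction is essentially immediate.

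For the converse, I would argue by contraposition: assuming $\Lambda$ is infinite, I will show $A := \mathbb{M}_\Lambda(\mathbb{C})$ has no ultra central approximate identity. Suppose toward a contradiction that $(e_\alpha)\subseteq A^{**}$ satisfies $a e_\alpha = e_\alpha a$ and $e_\alpha a \to a$ (weak$^*$ or norm — one should pin this down, but the argument below is robust) for every $a\in A$. The key structural observation is that commuting with all the matrix units $\varepsilon_{i,j}$ forces any element of $A^{**}$ to be "scalar" in a strong sense. Concretely, $A^{**}$ can be identified (via the isomorphism $\theta$ from the preliminaries, with $A\cong\mathbb{C}\otimes_p\mathbb{M}_\Lambda(\mathbb{C})$, and known descriptions of the bidual of such matrix algebras) with a space on which the relations $\varepsilon_{i,j}\cdot F = F\cdot \varepsilon_{i,j}$ for all $i,j$ pin down $F$ up to a single scalar parameter times a formal identity; but since $\Lambda$ is infinite, the genuine identity $\sum_i\varepsilon_{i,i}$ does not lie in $A$ (its $\ell^1$-norm is infinite), and one shows the only candidate in $A^{**}$ compatible with $e_\alpha a\to a$ cannot exist because pairing against a suitable functional (e.g. the functional $f\in A^*$ with $f(\varepsilon_{i,i})=1$ for all $i$, which is bounded on the unit ball of $A$) yields $f(e_\alpha \varepsilon_{i,i}) = f(\varepsilon_{i,i}e_\alpha)$ must simultaneously tend to $f(\varepsilon_{i,i})=1$ for infinitely many $i$ while the centrality relation forces a single common value, contradicting boundedness of the net $(e_\alpha)$ — or, if the net is unbounded, contradicting the convergence $e_\alpha a\to a$ tested against $f$.

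The main obstacle I anticipate is making precise the claim that centrality in $A^{**}$ is as rigid as centrality in $A$; a priori $A^{**}$ is a much larger and less transparent algebra (it need not even be one of the familiar "larger matrix" completions), and the Arens products introduce subtleties. The cleanest route is probably to avoid describing $A^{**}$ explicitly: instead, restrict attention to the action on finitely supported matrices, use that $\varepsilon_{i,i}e_\alpha = e_\alpha\varepsilon_{i,i}$ together with $\varepsilon_{i,i}A\varepsilon_{j,j} = \mathbb{C}\varepsilon_{i,j}$ to deduce that $\varepsilon_{i,i}e_\alpha\varepsilon_{j,j}$ behaves like a scalar multiple $c_\alpha\,\varepsilon_{i,j}$ independent of $i,j$, then show $e_\alpha\varepsilon_{i,i}\to\varepsilon_{i,i}$ forces $c_\alpha\to 1$ for each $i$, and finally derive a contradiction with the uniform boundedness of $(e_\alpha\varepsilon_{1,1})$ as $\alpha$ varies (since $\|e_\alpha\sum_{i\in F}\varepsilon_{i,i}\| \geq \sum_{i\in F}|c_\alpha - o(1)|$ grows without bound over finite $F\subseteq\Lambda$). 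Pinning down this last inequality in $A^{**}$ rather than $A$ is the delicate point, and I would expect the authors to route it through the predual pairing with the specific functional $f$ above, for which $\langle f, e_\alpha a\rangle$ is controlled.
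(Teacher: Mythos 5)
Your overall strategy is genuinely different from the paper's. The paper works back inside $A=\mathbb{M}_\Lambda(\mathbb{C})$: it uses Goldstine's theorem to realize each $e_\alpha$ as the weak$^*$-limit of a bounded net $(y_\beta)$ in $A$, shows the commutators $ay_\beta-y_\beta a$ tend to $0$ weakly, deduces that all diagonal entries of $y_\beta$ converge (along a subnet) to one nonzero scalar $l$ and the off-diagonal entries to $0$, and then uses Mazur's theorem to place the matrix ``$l$ times the formal identity'' in the norm-closed convex hull inside $A$, contradicting its infinite $\ell^1$-norm. Your route stays upstairs in $A^{**}$ and is in principle cleaner: since $F\mapsto \varepsilon_{i,i}F\varepsilon_{j,j}$ is weak$^*$-continuous and maps $A$ onto the weak$^*$-closed one-dimensional space $\mathbb{C}\varepsilon_{i,j}$, Goldstine gives $\varepsilon_{i,i}e_\alpha\varepsilon_{j,j}\in\mathbb{C}\varepsilon_{i,j}$; centrality then yields $\varepsilon_{i,i}e_\alpha=\varepsilon_{i,j}e_\alpha\varepsilon_{j,i}$, so the scalar is independent of the index, i.e. $e_\alpha\varepsilon_{i,j}=c_\alpha\varepsilon_{i,j}$ for a single $c_\alpha$, and $e_\alpha\varepsilon_{i,i}\to\varepsilon_{i,i}$ forces $c_\alpha\to 1$. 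Up to this point your plan is correct and arguably sharper than the paper's argument.

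The gap is in your concluding contradiction. The inequality you propose, $\bigl\Vert e_\alpha\sum_{i\in F}\varepsilon_{i,i}\bigr\Vert\geq\sum_{i\in F}\vert c_\alpha-o(1)\vert$, yields nothing: since $e_\alpha\sum_{i\in F}\varepsilon_{i,i}=c_\alpha\sum_{i\in F}\varepsilon_{i,i}$, its norm is exactly $\vert c_\alpha\vert\,\vert F\vert$, which grows at the same rate as $\Vert\sum_{i\in F}\varepsilon_{i,i}\Vert=\vert F\vert$; this is perfectly consistent with $e_\alpha a\to a$ and with $\Vert e_\alpha a\Vert\leq\Vert e_\alpha\Vert\,\Vert a\Vert$, so neither ``uniform boundedness of $(e_\alpha\varepsilon_{1,1})$'' nor unboundedness of the net $(e_\alpha)$ produces a contradiction. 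What actually kills the argument is the finiteness of $\Vert e_\alpha\Vert$ for a single fixed $\alpha$, tested against the functional $f\in A^*\cong\ell^\infty(\Lambda\times\Lambda)$ you name (with $f(\varepsilon_{i,j})=\delta_{i,j}$): for $a_F=\sum_{i\in F}\varepsilon_{i,i}$ one has $\Vert a_F\cdot f\Vert_{A^*}\leq 1$ while $\langle e_\alpha, a_F\cdot f\rangle=\langle e_\alpha a_F,f\rangle=c_\alpha\vert F\vert$, so $\vert c_\alpha\vert\,\vert F\vert\leq\Vert e_\alpha\Vert$ for every finite $F\subseteq\Lambda$; when $\Lambda$ is infinite this forces $c_\alpha=0$ for every $\alpha$, contradicting $c_\alpha\to 1$. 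You gesture at exactly this pairing but never write it down, and the contradiction you do write down is not one; supplying this step would complete your proof.
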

\begin{proof}
 Suppose that
$A=\mathbb{M}_{\Lambda}(\mathbb{C})$  has an ultra central approximate identity.
It follows that there exists a net $(e_{\alpha})$ in
$A^{**}$ such that $a\cdot e_{\alpha}=e_{\alpha}\cdot
a$ and $e_{\alpha}a\rightarrow a$ for each $a\in A.$
Suppose that  $a$ is  any non-zero element of $A$. Using the  Hahn-Banach theorem,  we have  a bounded linear functional $\Upsilon$ in $A^{*}$ such that
$\Upsilon(a)\neq 0.$ Since the  convergence of a net  with respect to the norm topology implies the convergence with respect to the $w^{*}-$topology,  we have 
$e_{\alpha}a(\Upsilon)\rightarrow a(\Upsilon)$. 
Thus 
$e_{\alpha}(a\cdot \Upsilon)\rightarrow
\Upsilon(a)\neq 0$. 
Then without loss of generality, we may suppose that 
$e_{\alpha}(a\cdot \Upsilon)\neq 0$ for each
$\alpha$. By Alaghlou's theorem, there exists a  bounded net
$(x^{\beta}_{\alpha})$ with the bound $||e_{\alpha}||$ in
$A$ such that
$x^{\beta}_{\alpha}\xrightarrow{w^{*}}e_{\alpha}$ .
On the other hand we have 
$a\cdot x^{\beta}_{\alpha}\xrightarrow{w^{*}}a\cdot e_{\alpha}$ and
$x^{\beta}_{\alpha}\cdot a\xrightarrow{w^{*}}e_{\alpha}\cdot a$ for
each $a\in A.$
Therefore 
$a\cdot
x^{\beta}_{\alpha}-x^{\beta}_{\alpha}\cdot a\xrightarrow{w^{*}}0$
(and also since $(x^{\beta}_{\alpha})$ is a net in $A$
we have $a\cdot x^{\beta}_{\alpha}-x^{\beta}_{\alpha}\cdot
a\xrightarrow{w}0$).
Fix $\alpha$ and set $y_{\beta}=x^{\beta}_{\alpha}$.  It is easy to see that 
$(y_{\beta})_{\beta}$
is a bounded net in $A$ such that  $a y_{\beta}-y_{\beta}
a\xrightarrow{w}0$ and
$y_{\beta}\xrightarrow{w^{*}}e_{\alpha}$ for each
$a\in A$.
Let  $y_{\beta}=(y^{i,j}_{\beta})$, where
$y^{i,j}_{\beta}\in \mathbb{C}$,  for every $i,j\in\Lambda$.
It is well-known that  the product
of the weak topology on $\mathbb{C}$ coincides with the weak
topology on $A$ \cite[Theorem 4.3]{schae}. Then  for each 
$i_{0}\in\Lambda$ we have
$\varepsilon_{i_{0},j}y_{\beta}-y_{\beta}\varepsilon_{i_{0},j}\xrightarrow{w}0$.
It gives that 
$y^{j,j}_{\beta}-y^{i_{0},i_{0}}_{\beta}\xrightarrow{w}0$ and
$y^{i,j}_{\beta}\xrightarrow{w}0$, whenever $i\neq j$.
On the other hand boundedness of  the net 
$(y_{\beta})$ implies that  $(y^{i_{0},i_{0}}_{\beta})$ is a
bounded net in $\mathbb{C}$.
So the net  $(y^{i_{0},i_{0}}_{\beta})$ in $\mathbb{C}$ has a
$w^{*}$-convergence subnet. Since $\mathbb{C}$ is a Hilbert space we can assume that the subnet has a $w$-convergence subnet, with $w$-limit point $l$, say
$(y^{i_{0},i_{0}}_{\beta_{k}})$. Again since  the net $(y^{i_{0},i_{0}}_{\beta_{k}})$ belongs to $\mathbb{C}$ we may assume that the convergence happens in the norm topology,  so
 $y^{i_{0},i_{0}}_{\beta_{k}}\xrightarrow{|\cdot|} l$.
Now the fact 
$y^{j,j}_{\beta}-y^{i_{0},i_{0}}_{\beta}\xrightarrow{w}0$, implies that 
$y^{j,j}_{\beta}-y^{i_{0},i_{0}}_{\beta}\xrightarrow{|.|}0$.
Thus by 
$y^{j,j}_{\beta_{k}}-y^{i_{0},i_{0}}_{\beta_{k}}\xrightarrow{|.|}0$, we have  $y^{j,j}_{\beta_{k}}\xrightarrow{|.|}l$ for each $j\in\Lambda.$
We claim that $l\neq 0$.
Suppose  that $l=0$. So by 
\cite[Theorem 4.3]{schae} we have $y_{\beta}\xrightarrow{w}0$. It implies that 
$(a\cdot\Upsilon)(y_{\beta})\rightarrow 0$. 
Since 
$(a\cdot\Upsilon)(y_{\beta})=y_{\beta}(a\cdot\Upsilon)\rightarrow e_{\alpha}(a\cdot
\Upsilon)\neq 0$,  we have a contradiction.   Thus $l\neq 0$.
On the other hand 
$y^{j,j}_{\beta_{k}}-y^{i_{0},i_{0}}_{\beta_{k}}\xrightarrow{w}0$
and $y^{i,j}_{\beta_{k}}\xrightarrow{w}0$ by \cite[Theorem 4.3]{schae}. 
It follows that  $y_{\beta_{k}}\xrightarrow{w}y_{0}$, where
$y_{0}$ is denoted for a matrix with $l$ in the  diagonal position  and $0$
elsewhere.
Then  $y_{0}\in
\overline{\hbox{Conv}(y_{\beta})}^{w}=\overline{\hbox{Conv}(y_{\beta})}^{||.||}$. It deduces   $y_{0}\in A$.  Therefore
 $\infty =\sum_{j\in \Lambda}|l|= \sum_{j\in
\Lambda}|y^{j,j}_{0}|=||y_{0}||<\infty,$ provided that
$\Lambda$ is
infinite which is a contradiction. So $\Lambda$ must be finite.

Conversely, suppose that  $\Lambda$ be finite. it is easy to see that  $\mathbb{M}_{\Lambda}(\mathbb{C})$ has an identity, say $e$. Since two maps $a\mapsto ae$ and $a\mapsto ea$ on $\mathbb{M}_{\Lambda}(\mathbb{C})^{**}$ are  $w^{*}$-continuous, we have $e$ as an identity for $\mathbb{M}_{\Lambda}(\mathbb{C})^{**}$. Thus $\mathbb{M}_{\Lambda}(\mathbb{C})$  has an ultra central approximate identity.
\end{proof}
\begin{Lemma}\label{amen}
Let $A$ be an amenable Banach algebra. Then $A$ has an ultra central approximate identity. 
\end{Lemma}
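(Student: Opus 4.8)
The plan is to recall that an amenable Banach algebra has a bounded approximate identity, and then to promote that bounded approximate identity to an ultra central approximate identity living in $A^{**}$. Concretely, the first step is the classical fact (Johnson) that if $A$ is amenable then $A$ has a bounded approximate identity $(u_\alpha)$; this is stated, for instance, in the standard references on amenability and may be invoked directly. The second step is to pass to the bidual: equip $A^{**}$ with one of its Arens products, so that $A^{**}$ becomes a Banach algebra containing $A$ as a closed subalgebra. A bounded net in $A$ has a $w^{*}$-cluster point in $A^{**}$ by Banach--Alaoglu, and a standard argument shows that any such $w^{*}$-cluster point $E$ of a bounded approximate identity $(u_\alpha)$ is a (say, right) identity for $A^{**}$ with respect to that Arens product, and in particular satisfies $E a = a E = a$ for every $a \in A$.

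The heart of the matter is that this single element $E \in A^{**}$ is not merely a one-sided identity but can be taken to commute with every element of $A$ and to act as a two-sided identity on $A$; once we have that, the constant net $e_\alpha \equiv E$ witnesses the ultra central approximate identity. To arrange commutation I would use the mixed identity trick: an amenable (more generally, any Banach algebra with a bounded approximate identity) algebra has a \emph{mixed identity} $E \in A^{**}$, i.e. an element that is simultaneously a right identity for the first Arens product and a left identity for the second, equivalently $E\Box a = a\Diamond E = a$ for all $a\in A^{**}$, where $\Box,\Diamond$ denote the two Arens products. Restricting to $a \in A \subseteq A^{**}$, and using that on $A$ the two Arens products both restrict to the original product and that $A\cdot E$ and $E\cdot A$ can be computed via $w^{*}$-limits of the net, one gets $Ea = aE = a$ for all $a \in A$. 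Then $a E = E a$ for all $a \in A$ trivially since both equal $a$, so the constant net does the job.

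The step I expect to be the main obstacle is the bookkeeping with the two Arens products: one must be careful about which product is used when asserting that a $w^{*}$-cluster point of a bounded approximate identity is an identity, and about the continuity properties (the first Arens product $\Box$ is $w^{*}$-continuous in its \emph{first} variable, the second $\Diamond$ in its \emph{second} variable). The clean way to package this, which I would adopt, is: take $E$ a $w^{*}$-cluster point of $(u_\alpha)$; for $a \in A$ one has $u_\alpha a \to a$ and $a u_\alpha \to a$ in norm, hence $w^{*}$; passing to the cluster point through the separately $w^{*}$-continuous actions of $A$ on $A^{**}$ gives $E a = a$ and $a E = a$. Since these two identities both hold for each fixed $a \in A$, we get $a E = E a \ (= a)$, and $E a \to a$ trivially along the constant net. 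This avoids ever needing the full two-Arens-product machinery and keeps the argument within the framework already used in the proof of Theorem~\ref{matris} (Banach--Alaoglu / ``Alaghlou's theorem'', $w^{*}$-limits of bounded nets in $A$). Hence $A$ has an ultra central approximate identity.
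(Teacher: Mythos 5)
Your proof is correct, but it takes a different route from the paper's. The paper goes through the virtual diagonal: amenability gives $m\in (A\otimes_{p}A)^{**}$ with $a\cdot m=m\cdot a$ and $\pi_{A}^{**}(m)a=a$, and then $e:=\pi_{A}^{**}(m)$ satisfies $ae=\pi_{A}^{**}(a\cdot m)=\pi_{A}^{**}(m\cdot a)=ea$ and $ea=a$, so the constant net $(e)$ works. You instead invoke Johnson's theorem that amenability yields a bounded approximate identity $(u_{\alpha})$, take a $w^{*}$-cluster point $E\in A^{**}$, and use the $w^{*}$-continuity of the maps $F\mapsto a\cdot F$ and $F\mapsto F\cdot a$ on $A^{**}$ for fixed $a\in A$ to get $Ea=aE=a$; your care with which variable each Arens product is $w^{*}$-continuous in is unnecessary precisely because one factor always lies in $A$, where the two products and the canonical module actions coincide, so the "clean packaging" you settle on is the right one and is gap-free. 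What your approach buys is the strictly more general statement that \emph{any} Banach algebra with a bounded approximate identity has an ultra central approximate identity -- a fact the paper states separately right after the lemma, with the remark that it follows "similarly," so your argument is essentially the proof the authors had in mind for that remark. What the paper's approach buys is brevity: given the virtual diagonal, centrality of $\pi_{A}^{**}(m)$ is an immediate algebraic identity, with no cluster-point or continuity bookkeeping at all.
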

\begin{proof}
Since $A$ is amenable, there exists an element $m\in (A\otimes_{p}A)^{**}$ such that $a\cdot m=m\cdot a$  for each $a,b\in A$,  see \cite{run}. It is easy to see that $\pi^{**}_{A}(m)\in A^{**}$,  $a\pi^{**}_{A}(m)=\pi^{**}_{A}(a\cdot m)=\pi^{**}_{A}(m\cdot a)=\pi^{**}_{A}(m)a$ and $\pi^{**}_{A}(m)a=a$ for every $a\in A.$ So $A$ has an ultra central approximate identity.
\end{proof}
We recall that a Banach algebra $A$ is called pseudo-contractibe if there exists a net $(m_{\alpha})$ in $A\otimes_{p}A$ such that 
$a\cdot m_{\alpha}=m_{\alpha}\cdot a$ and $\pi_{A}(m_{\alpha})a\rightarrow a $ for every $a\in A$, see \cite{ghah pse}.
\begin{Lemma}
Let $A$ be a pseudo-contractible Banach algebra. Then $A$ has an ultra central approximate identity.
\end{Lemma}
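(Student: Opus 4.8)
The plan is to imitate the proof of Lemma \ref{amen}, replacing the virtual diagonal by the net witnessing pseudo-contractibility. Recall that pseudo-contractibility gives a net $(m_\alpha)$ in $A\otimes_p A$ with $a\cdot m_\alpha=m_\alpha\cdot a$ and $\pi_A(m_\alpha)a\to a$ for all $a\in A$. The first step is to push this net forward through the product morphism: set $e_\alpha=\pi_A(m_\alpha)\in A\subseteq A^{**}$. Since $\pi_A$ is an $A$-bimodule morphism (it intertwines both one-sided actions, as $\pi_A(a\cdot m)=a\pi_A(m)$ and $\pi_A(m\cdot a)=\pi_A(m)a$), the relation $a\cdot m_\alpha=m_\alpha\cdot a$ immediately yields $a e_\alpha=\pi_A(a\cdot m_\alpha)=\pi_A(m_\alpha\cdot a)=e_\alpha a$ for every $a\in A$. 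The second step is the approximate-identity condition: by hypothesis $\pi_A(m_\alpha)a=e_\alpha a\to a$ for every $a\in A$, which is exactly what is required. Hence $(e_\alpha)$ is a net in $A^{**}$ (in fact already in $A$) that commutes with every element of $A$ and satisfies $e_\alpha a\to a$, so $A$ has an ultra central approximate identity.

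I do not expect any serious obstacle here; the statement is essentially a one-line observation that $\pi_A$ carries the defining net of pseudo-contractibility to the defining net of an ultra central approximate identity, using only that $\pi_A$ is an $A$-bimodule homomorphism. The one point worth a word of care is that the definition of ultra central approximate identity asks for a net in $A^{**}$, while here the net lives in $A$ itself; this is harmless since $A\hookrightarrow A^{**}$ canonically and the bimodule actions are compatible with this embedding, so the computation $a e_\alpha = e_\alpha a$ performed in $A$ is valid verbatim in $A^{**}$. Alternatively, one could simply invoke the general principle that a pseudo-contractible Banach algebra has a central approximate identity $(\pi_A(m_\alpha))$ together with the remark in the introduction that a central approximate identity yields an ultra central one, but writing the short direct argument is cleaner.

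\begin{proof}
Since $A$ is pseudo-contractible, there exists a net $(m_{\alpha})$ in $A\otimes_{p}A$ such that $a\cdot m_{\alpha}=m_{\alpha}\cdot a$ and $\pi_{A}(m_{\alpha})a\rightarrow a$ for every $a\in A$. Put $e_{\alpha}=\pi_{A}(m_{\alpha})\in A\subseteq A^{**}$. As $\pi_{A}$ is an $A$-bimodule morphism, for every $a\in A$ we have
$$ae_{\alpha}=a\pi_{A}(m_{\alpha})=\pi_{A}(a\cdot m_{\alpha})=\pi_{A}(m_{\alpha}\cdot a)=\pi_{A}(m_{\alpha})a=e_{\alpha}a.$$
Moreover $e_{\alpha}a=\pi_{A}(m_{\alpha})a\rightarrow a$ for every $a\in A$. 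Therefore $(e_{\alpha})$ is a net in $A^{**}$ with $ae_{\alpha}=e_{\alpha}a$ and $e_{\alpha}a\rightarrow a$ for each $a\in A$, so $A$ has an ultra central approximate identity.
\end{proof}
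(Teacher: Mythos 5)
Your proof is correct and follows exactly the paper's argument: set $e_{\alpha}=\pi_{A}(m_{\alpha})$, use that $\pi_{A}$ is an $A$-bimodule morphism to obtain $ae_{\alpha}=e_{\alpha}a$, and note that the convergence $e_{\alpha}a\to a$ is the pseudo-contractibility hypothesis itself. The remark about viewing the net in $A$ as a net in $A^{**}$ via the canonical embedding matches the paper's closing observation as well.
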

\begin{proof}
Since $A$ is pseudo-contractible, there exists a net $(m_{\alpha})$ in $A\otimes_{p}A$ such that 
$a\cdot m_{\alpha}=m_{\alpha}\cdot a$ and $\pi_{a}(m_{\alpha})a\rightarrow a $ for every $a\in A$. Set $e_{\alpha}=\pi_{A}(m_{\alpha})$. It is easy to see that $$ae_{\alpha}=a\pi_{A}(m_{\alpha})=\pi_{A}(a\cdot m_{\alpha})=\pi_{A}(m_{\alpha}\cdot a)=\pi_{A}(m_{\alpha})a=e_{\alpha}a$$ and $e_{\alpha}a=\pi_{A}(m_{\alpha})a\rightarrow a$ for every $a\in A.$ Since $A$ can be embedded in $A^{**}$, $(e_{\alpha})$ becomes an ultra  central approximate identity for $A$.
\end{proof}
Clearly one can show  that every Banach algebra with a central approximate identity has an ultra central approximate identity.
Also similar to the proof of Lemma \ref{amen}, we can show that every Banach algebra with a bounded approximate identity has an ultra central approximate identity.
\begin{Example}
Let $S=\mathbb{N}$. With {\it min} as its multiplication, $S$ becomes a commutative semigroup. Let $w:S\rightarrow [1,\infty) $ be any function. It is easy to show that  $w(st)\leq w(s)w(t)$ for each $s,t\in S.$ So $w$ is a weight on $S.$ Set $A=\ell^{1}(S,w)$, the weighted semigroup algebra with respect to $S.$ Suppose that $w(n)=e^{n}$ for each $n\in S.$ Clearly $\lim w(n)=\infty.$ So by \cite[Proposition 3.3.1]{dales} $A$ doesn't have a bounded approximate identity but it has a central approximate identity. Then we have a Banach algebra with an ultra central approximate identity but it  doesnt have bounded approximate identity.

Let $G$ be a locally compact non-$SIN$ group. Then by  the main result of \cite{kot}, we have $L^{1}(G) $ doesnt have central approximate identity. On the other hand it is well-known that every group algebra on a locally compact group $G$ has a bounded approximate identity. Then $L^{1}(G)$ has an ultra central approximate identity but it does't have a central approximate identity.
\end{Example}
\begin{Lemma}\label{tensor}
Let $A$ and $B$ be Banach algebras which $A$ is unital. If $A\otimes_{p}B$ has an ultra central approximate identity, then $B$ has an ultra central approximate identity.
\end{Lemma}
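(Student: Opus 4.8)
The plan is to realize $B$ as a $B$-bimodule retract of $A\otimes_{p}B$ and then transport an ultra central approximate identity from $A\otimes_{p}B$ down to $B$ by means of the second adjoint of the retraction.

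First I would fix the unit $1_{A}$ of $A$ and, using the Hahn--Banach theorem, choose $f\in A^{*}$ with $f(1_{A})=1$. Put $\iota:B\to A\otimes_{p}B$, $\iota(b)=1_{A}\otimes b$; since $A$ is unital, $\iota$ is a bounded algebra homomorphism. Next, let $P:A\otimes_{p}B\to B$ be the bounded linear map determined on elementary tensors by $P(a\otimes b)=f(a)b$: by the defining property of the projective tensor norm this map is bounded (with $\|P\|\le\|f\|$) on the algebraic tensor product, hence extends to the completion, and one checks $P\circ\iota=\mathrm{id}_{B}$. Regarding $A\otimes_{p}B$ as a Banach $B$-bimodule through $\iota$, that is $b\cdot z=\iota(b)z$ and $z\cdot b=z\,\iota(b)$, a short computation on elementary tensors gives $P(\iota(b)z)=bP(z)$ and $P(z\,\iota(b))=P(z)b$, so $P$ is a $B$-bimodule morphism. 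Consequently $P^{**}:(A\otimes_{p}B)^{**}\to B^{**}$ is a $w^{*}$-$w^{*}$ continuous $B$-bimodule morphism whose restriction to $A\otimes_{p}B$ is $P$.

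Now let $(e_{\alpha})$ be an ultra central approximate identity for $A\otimes_{p}B$ inside $(A\otimes_{p}B)^{**}$, and set $u_{\alpha}=P^{**}(e_{\alpha})\in B^{**}$. Fixing $b\in B$: since $\iota(b)\in A\otimes_{p}B$, ultra centrality of $(e_{\alpha})$ gives $\iota(b)\cdot e_{\alpha}=e_{\alpha}\cdot\iota(b)$, and applying the bimodule map $P^{**}$ yields $b\cdot u_{\alpha}=u_{\alpha}\cdot b$. For the approximation, ultra centrality also gives $e_{\alpha}\cdot\iota(b)\to\iota(b)$ in norm, so applying the bounded map $P^{**}$, using that it is a right $B$-module map and that $P^{**}(\iota(b))=P(\iota(b))=b$, we obtain $u_{\alpha}\cdot b=P^{**}(e_{\alpha}\cdot\iota(b))\to b$. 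Since $B\hookrightarrow B^{**}$, this shows $(u_{\alpha})$ is an ultra central approximate identity for $B$.

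The only point that needs care is the passage to second adjoints: one must verify that $P^{**}$ intertwines the canonical $B$-bimodule actions induced on the two biduals (via the module actions on duals described in the preliminaries, restricted to elements coming from $B$), which is the standard fact that the second transpose of a module homomorphism is again a module homomorphism. This is checked by unwinding the definitions $\langle b\cdot m,\Psi\rangle=\langle m,\Psi\cdot b\rangle$ and reducing everything, on $A\otimes_{p}B$ itself, to the already established identities $P(\iota(b)z)=bP(z)$ and $P(z\,\iota(b))=P(z)b$; the rest is routine bookkeeping.
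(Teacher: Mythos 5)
Your proposal is correct and follows essentially the same route as the paper: the paper also fixes $\phi_{e}\in A^{*}$ with $\phi_{e}(e)=1$, forms the slice map $\phi_{e}\otimes id_{B}$ (your $P$), regards $A\otimes_{p}B$ as a $B$-bimodule via $b\mapsto e\otimes b$, and pushes the ultra central approximate identity down through the second adjoint. Your explicit framing of $P$ as a bimodule retraction with $P\circ\iota=\mathrm{id}_{B}$ is just a cleaner packaging of the same computation.
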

\begin{proof}
Suppose that $A\otimes_{p}B$ has an ultra central approximate identity. Then there exists a net  $(e_{\alpha})$ in $(A\otimes_{p}B)^{**}$ such that $xe_{\alpha}=e_{\alpha}x$ and $e_{\alpha}x\rightarrow x$ for every $x\in A\otimes_{p}B$. Using the following actions one may consider  $A\otimes_{p}B$ as  a Banach $B-$bimodule:
$$b_{1}\cdot(a\otimes b_{2})=a\otimes b_{1}b_{2}, \quad (a\otimes b_{2})\cdot b_{1}=a\otimes b_{2}b_{1}\qquad (a\in A,b_{1},b_{2}\in B).$$
Let $e$ be the identity of $A$. By Hahn-Banach theorem we can find $\phi_{e}\in A^{*}$ such that $\phi_{e}(e)=1.$ Define  $\phi_{e}\otimes id_{B}$ from $A\otimes_{p}B$ into $ B$ by $\phi_{e}\otimes id_{B}(a\otimes b)=\phi_{e}(a)b$ for every $a\in A$ and $b\in B$, where $id_{B}$ is denoted for the identity map on $B$. It is easy to see that $\phi_{e}\otimes id_{B}$ is a bounded linear map.  We claim that $((\phi_{e}\otimes id_{B})^{**}(e_{\alpha})_{\alpha})$ is an ultra central approximate identity for $B.$ To see this consider $$b(\phi_{e}\otimes id_{B})^{**}(x)=(\phi_{e}\otimes id_{B})^{**}(b\cdot x),\quad (\phi_{e}\otimes id_{B})^{**}(x)b=(\phi_{e}\otimes id_{B})^{**}(x\cdot b),\quad (x\in (A\otimes_{p}B)^{**}, b\in B).$$ Then we have 

\begin{equation*}
\begin{split}
b(\phi_{e}\otimes id_{B})^{**}(e_{\alpha})=(\phi_{e}\otimes id_{B})^{**}(b\cdot e_{\alpha})&=(\phi_{e}\otimes id_{B})^{**}((e\otimes b) e_{\alpha})\\
&=(\phi_{e}\otimes id_{B})^{**}(e_{\alpha}(e\otimes b) )\\
&=(\phi_{e}\otimes id_{B})^{**}( e_{\alpha}\cdot b)\\
&=(\phi_{e}\otimes id_{B})^{**}( e_{\alpha})b
\end{split}
\end{equation*}
and 
\begin{equation*}
\begin{split}
(\phi_{e}\otimes id_{B})^{**}( e_{\alpha})b=(\phi_{e}\otimes id_{B})^{**}(e_{\alpha}(e\otimes b) )\rightarrow (\phi_{e}\otimes id_{B})^{**}(e\otimes b)= \phi_{e}\otimes id_{B}(e\otimes b)=b,
\end{split}
\end{equation*}
for each $b\in B$. Thus $B$ has an ultra central approximate identity.
\end{proof}
\begin{Theorem}\label{inverse}
Let $S$ be an inverse semigroup such that $E(S)$ is uniformly
locally finite. Then the following are equivalent:
\begin{enumerate}
\item [(i)] $\ell^{1}(S)$ has an ultra central approximate identity;
\item [(ii)] Each $\mathfrak{D}$-class has finitely many
idempotent elements.
\end{enumerate}
\end{Theorem}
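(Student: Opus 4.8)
The plan is to reduce both implications to Theorem \ref{matris} by means of the $\ell^{1}$-analogue of the classical (Munn) decomposition of a finite inverse semigroup algebra. Precisely, I would invoke the fact that for a uniformly locally finite inverse semigroup $S$ there is a bounded algebra isomorphism of $\ell^{1}(S)$ onto the $\ell^{1}$-direct sum $\bigoplus_{\lambda\in\Lambda}\mathbb{M}_{E(\mathfrak{D}_{\lambda})}(\ell^{1}(G_{\lambda}))$, with coordinatewise product, where $G_{\lambda}$ is a maximal subgroup lying in $\mathfrak{D}_{\lambda}$; the transform implementing this isomorphism is bounded with bounded inverse precisely because $E(S)$ is uniformly locally finite (see e.g. \cite{rams}). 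Via the isometric isomorphism $\theta$ of the preliminaries, each summand equals $\ell^{1}(G_{\lambda})\otimes_{p}\mathbb{M}_{E(\mathfrak{D}_{\lambda})}(\mathbb{C})$ as a Banach algebra. I would also use two elementary transfer facts: possessing an ultra central approximate identity is preserved under bounded algebra isomorphisms (pass to second adjoints) and it descends along any continuous surjective homomorphism $q\colon A\to B$ — if $(e_{\alpha})\subseteq A^{**}$ witnesses it for $A$, then $(q^{**}(e_{\alpha}))\subseteq B^{**}$ witnesses it for $B$, since for $b=q(a)$ one has $b\,q^{**}(e_{\alpha})=q^{**}(a\cdot e_{\alpha})=q^{**}(e_{\alpha}\cdot a)=q^{**}(e_{\alpha})\,b$ and $q^{**}(e_{\alpha})\,b=q^{**}(e_{\alpha}\cdot a)\to q^{**}(a)=b$, using norm-continuity of $q^{**}$.

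For (i) $\Rightarrow$ (ii), I would fix $\lambda$ and apply the coordinate projection $\bigoplus_{\mu}\mathbb{M}_{E(\mathfrak{D}_{\mu})}(\ell^{1}(G_{\mu}))\to\mathbb{M}_{E(\mathfrak{D}_{\lambda})}(\ell^{1}(G_{\lambda}))$, a contractive surjective homomorphism, to deduce that $\ell^{1}(G_{\lambda})\otimes_{p}\mathbb{M}_{E(\mathfrak{D}_{\lambda})}(\mathbb{C})$ has an ultra central approximate identity. As $\ell^{1}(G_{\lambda})$ is unital, Lemma \ref{tensor} gives that $\mathbb{M}_{E(\mathfrak{D}_{\lambda})}(\mathbb{C})$ has an ultra central approximate identity, and then Theorem \ref{matris} forces $E(\mathfrak{D}_{\lambda})$ to be finite. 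Since $\lambda$ is arbitrary, (ii) holds.

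For (ii) $\Rightarrow$ (i), note that when $E(\mathfrak{D}_{\lambda})$ is finite the summand $A_{\lambda}:=\mathbb{M}_{E(\mathfrak{D}_{\lambda})}(\ell^{1}(G_{\lambda}))$ is a finite matrix algebra over the unital algebra $\ell^{1}(G_{\lambda})$ and is therefore unital, with identity $e_{\lambda}$, say. For a finite subset $F\subseteq\Lambda$ I would put $u_{F}=\sum_{\lambda\in F}e_{\lambda}\in\bigoplus_{\lambda}A_{\lambda}$; then for $a=(a_{\lambda})_{\lambda}$ both $u_{F}a$ and $au_{F}$ equal the element with $\lambda$-coordinate $a_{\lambda}$ for $\lambda\in F$ and $0$ elsewhere, so the $u_{F}$ are central, and $\|a-u_{F}a\|=\sum_{\lambda\notin F}\|a_{\lambda}\|\to0$ along the directed set of finite subsets of $\Lambda$. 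Hence $(u_{F})$ is a (generally unbounded) central approximate identity for $\bigoplus_{\lambda}A_{\lambda}$, and a central approximate identity is in particular an ultra central one; transporting it back through the isomorphism yields (i).

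The only substantial ingredient is the direct-sum decomposition of $\ell^{1}(S)$; once it is available the theorem is a short assembly of Theorem \ref{matris}, Lemma \ref{tensor}, and the observation that a central approximate identity is automatically ultra central. I therefore expect the main obstacle to lie in justifying that the decoupling transform on $\ell^{1}(S)$ is bounded with bounded inverse, which is exactly where the uniform local finiteness of $E(S)$ is used. It is also worth flagging that the net $(u_{F})$ need not be bounded: condition (ii) is strictly weaker than the finiteness of $E(S)$ (which, by Ramsden's Proposition~2.9, is forced by the existence of a bounded approximate identity), so $\ell^{1}(S)$ may well have an ultra central approximate identity while failing to have a bounded approximate identity.
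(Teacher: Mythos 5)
Your proposal is correct and follows essentially the same route as the paper: both use Ramsden's decomposition $\ell^{1}(S)\cong\ell^{1}\text{-}\bigoplus_{\lambda}\mathbb{M}_{E(\mathfrak{D}_{\lambda})}(\ell^{1}(G_{\lambda}))$, push the ultra central approximate identity onto each summand via the second adjoint of the coordinate projection, and then combine Lemma \ref{tensor} with Theorem \ref{matris}; the converse is in both cases the observation that finite sums of the summands' identities form a central (hence ultra central) approximate identity. Your explicit construction of the net $(u_{F})$ and the remark on its possible unboundedness merely fill in details the paper leaves as ``one can easily see.''
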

\begin{proof}
Suppose that $\ell^{1}(S)$ has an ultra central approximate identity. Then there esists a net $(e_{\alpha})$ in $\ell^{1}(S)^{**}$ such that $ae_{\alpha}=e_{\alpha}a$ and $e_{\alpha}a\rightarrow a$ for each $a\in \ell^{1}(S).$ Using \cite[Theorem
2.18]{rams} since   $S$
is a uniformly locally finite inverse semigroup, we have 
 $$\ell^{1}(S)\cong
\ell^{1}-\bigoplus\{\mathbb{M}_{E(\mathfrak{D}_{\lambda})}(\ell^{1}(G_{p_{\lambda}}))\},$$
where $\{\mathfrak{D}_{\lambda}:\lambda\in \Lambda\}$ is a
$\mathfrak{D}$-class and $G_{p_{\lambda}}$ is a maximal subgroup at
$p_{\lambda}$. We claim that $\mathbb{M}_{E(\mathfrak{D}_{\lambda})}(\ell^{1}(G_{p_{\lambda}}))$ has an ultra central approximate identity. To see this let $P_{\lambda}$ be the projection map from $\ell^{1}(S)$ onto $\mathbb{M}_{E(\mathfrak{D}_{\lambda})}(\ell^{1}(G_{p_{\lambda}}))$. It is easy to see that $$aP^{**}_{\lambda}(e_{\alpha})=P^{**}_{\lambda}(ae_{\alpha})=P^{**}_{\lambda}(e_{\alpha}a)=P^{**}_{\lambda}(e_{\alpha})a$$
and
$$P^{**}_{\lambda}(e_{\alpha})a=P^{**}_{\lambda}(e_{\alpha}a)\rightarrow P^{**}_{\lambda}(a)=a,$$
for every $a\in \mathbb{M}_{E(\mathfrak{D}_{\lambda})}(\ell^{1}(G_{p_{\lambda}}))$. Then $\mathbb{M}_{E(\mathfrak{D}_{\lambda})}(\ell^{1}(G_{p_{\lambda}}))$ has an ultra central approximate identity.  On the other hand we know that 
$\mathbb{M}_{E(\mathfrak{D}_{\lambda})}(\ell^{1}(G_{p_{\lambda}}))\cong
\ell^{1}(G_{p_{\lambda}})
\otimes_{p}\mathbb{M}_{E(\mathfrak{D}_{\lambda})}(\mathbb{C})$. Since $\ell^{1}(G_{p_{\lambda}})$ is a unital Banach algebra, by Lemma \ref{tensor},  we have $\mathbb{M}_{E(\mathfrak{D}_{\lambda})}(\mathbb{C})$ has an ultra central approximate identity. Now applying Lemma \ref{matris} implies that $E(\mathfrak{D}_{\lambda})$ is finite.

Conversely, suppose that   $E(\mathfrak{D}_{\lambda})$ is finite. Since each $\ell^{1}(G_{p_{\lambda}})$ is unital, each $\mathbb{M}_{E(\mathfrak{D}_{\lambda})}(\ell^{1}(G_{p_{\lambda}}))\cong
\ell^{1}(G_{p_{\lambda}})
\otimes_{p}\mathbb{M}_{E(\mathfrak{D}_{\lambda})}(\mathbb{C})$ is a unital Banach algebra. So one can easily see that $\ell^{1}(S)\cong
\ell^{1}-\bigoplus\{\mathbb{M}_{E(\mathfrak{D}_{\lambda})}(\ell^{1}(G_{p_{\lambda}}))\},$  has a central approximate identity. Therefore $\ell^{1}(S)$ has an ultra central approximate identity.
\end{proof}
For a locally compact group $G$ and non-empty set $I$, set
$$M^{0}(G,I)=\{(g)_{i,j}:g\in G,i,j\in I\}\cup \{0\},$$
where $(g)_{i,j}$ denotes the $I\times I$ matrix with  $g$ in
$(i,j)$-position and zero elsewhere. With the following
multiplication $M^{0}(G,I)$ becomes a semigroup
\begin{eqnarray*}�
&\textit{�$(g)_{i,j}\ast
(h)_{k,l}=$}\begin{cases}(gh)_{il}\,\,\,\,\,\,\,\,\,\,\,\,\,\,\,\,\,\,
j=k\cr � 0\,\,\,\,\,\,\,\,\,\,\,\,\,\,\,\,\,\,     \qquad j\neq k,
�\end{cases}\\�
\end{eqnarray*}�
It is well known that $M^{0}(G,I)$ is an inverse semigroup with
$(g)^{*}_{i,j}=(g^{-1})_{j,i}$. This semigroup is called Brandt
semigroup over $G$ with the  index set $I.$

\begin{Theorem}\label{brandt}
Let $S=M^{0}(G,I)$ be a Brandt semigroup over  a  group $G$ with index set $I.$  Then the following
are equivalent:
\begin{enumerate}
\item [(i)] $\ell^{1}(S)$ has an ultra central approximate identity;
\item [(ii)]  $I$ is finite;
\end{enumerate}
\end{Theorem}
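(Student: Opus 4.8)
The plan is to read off Theorem \ref{brandt} directly from Theorem \ref{inverse}, so the work is entirely structural: identify the inverse-semigroup data of the Brandt semigroup $S=M^{0}(G,I)$ and feed it into that theorem. The two things to pin down are uniform local finiteness of $E(S)$ (so that Theorem \ref{inverse} is applicable) and the idempotent content of the $\mathfrak{D}$-classes of $S$.

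First I would verify that $S$ is uniformly locally finite. Writing $e$ for the identity of $G$, one has $E(S)=\{(e)_{i,i}:i\in I\}\cup\{0\}$, and for a nonzero element $y=(h)_{k,l}$ the computation $yy^{*}=(hh^{-1})_{k,k}=(e)_{k,k}$ together with the multiplication rule of $M^{0}(G,I)$ shows that $yy^{*}x=x$ for $x=(g)_{i,j}$ forces $k=i$ and then $y=yy^{*}x$ forces $y=x$; since also $0\leq x$ for every $x$, this gives $(x]=\{0,x\}$ for $x\neq 0$ and $(0]=\{0\}$. Hence $\sup\{|(x]|:x\in S\}\leq 2<\infty$, so in particular $E(S)$ is uniformly locally finite and Theorem \ref{inverse} applies.

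Next I would determine the $\mathfrak{D}$-classes. For nonzero $s=(g)_{i,j}$ and $t=(h)_{k,l}$ we have $ss^{*}=(e)_{i,i}$ and $t^{*}t=(e)_{l,l}$; choosing $x=(e)_{i,l}$ gives $xx^{*}=(e)_{i,i}=ss^{*}$ and $x^{*}x=(e)_{l,l}=t^{*}t$, so $s\,\mathfrak{D}\,t$. On the other hand $xx^{*}=0$ forces $x=0$, so $0$ is $\mathfrak{D}$-related only to itself. Therefore $S$ has exactly two $\mathfrak{D}$-classes: the class $\mathfrak{D}_{1}=S\setminus\{0\}$ with $E(\mathfrak{D}_{1})=\{(e)_{i,i}:i\in I\}$, which has cardinality $|I|$, and the singleton class $\mathfrak{D}_{0}=\{0\}$ with $E(\mathfrak{D}_{0})=\{0\}$.

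Finally, Theorem \ref{inverse} says $\ell^{1}(S)$ has an ultra central approximate identity if and only if each $\mathfrak{D}$-class has finitely many idempotents. Since $E(\mathfrak{D}_{0})$ is automatically finite, this holds precisely when $|E(\mathfrak{D}_{1})|=|I|<\infty$, which is condition (ii); this proves the equivalence. I do not anticipate a genuine obstacle: the statement is an application of Theorem \ref{inverse}, and the only point demanding a little care is the bookkeeping around the adjoined zero, namely checking that $\{0\}$ is its own (harmless, finite-idempotent) $\mathfrak{D}$-class and that its presence does not spoil uniform local finiteness.
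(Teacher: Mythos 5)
Your proof is correct, but it takes a different route from the paper's. You derive Theorem \ref{brandt} as a corollary of Theorem \ref{inverse}: you check that $S=M^{0}(G,I)$ is uniformly locally finite (indeed $(x]=\{0,x\}$ for $x\neq 0$, so $\sup_{x}|(x]|\leq 2$), identify the two $\mathfrak{D}$-classes $S\setminus\{0\}$ and $\{0\}$ with idempotent sets of cardinality $|I|$ and $1$ respectively, and read off the equivalence. The paper instead bypasses Theorem \ref{inverse} for the forward direction and works directly with the Duncan--Namioka isomorphism $\ell^{1}(S)\cong[M_{I}(\mathbb{C})\otimes_{p}\ell^{1}(G)]\oplus_{1}\mathbb{C}$, projecting the ultra central approximate identity onto the summand $M_{I}(\mathbb{C})\otimes_{p}\ell^{1}(G)$, then invoking Lemma \ref{tensor} (since $\ell^{1}(G)$ is unital) and Theorem \ref{matris}; for the converse it notes that finiteness of $I$ makes $\ell^{1}(S)$ unital. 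The two arguments are essentially parallel in substance -- your verification of the $\mathfrak{D}$-class structure is exactly what makes Ramsden's decomposition specialize to the Duncan--Namioka one -- but yours buys economy by reusing the general theorem, at the cost of having to verify its hypotheses for $M^{0}(G,I)$; the paper's version is more self-contained for readers who only care about Brandt semigroups. One small wording slip: the defining condition for $y\leq x$ is $y=yy^{*}x$, not $yy^{*}x=x$; your computation ($k\neq i$ gives $yy^{*}x=0$, hence $y=0$; $k=i$ gives $yy^{*}x=x$, hence $y=x$) is nonetheless the right one and the conclusion $(x]=\{0,x\}$ stands.
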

\begin{proof}(i)$\Rightarrow$(ii) Suppose that $\ell^{1}(S)$ has an ultra central approximate identity. Using  \cite[Remark, p 315]{dun-nam}, we know that $\ell^{1}(S)$
is isometrically isomorphic with
$[M_{I}(\mathbb{C})\otimes_{p}\ell^{1}(G)]\oplus_{1}\mathbb{C}$. By similar argument as in the proof of Theorem \ref{inverse}(if part) we can see that $M_{I}(\mathbb{C})\otimes_{p}\ell^{1}(G)$ has an ultra central approximate identity. Since $\ell^{1}(G)$ is unital, Lemma \ref{tensor} follows that   $M_{I}(\mathbb{C})$  has an ultra approximate identity. Applying Lemma \ref{matris}, implies that $I$ is finite.

(ii)$\Rightarrow$(i) Since $I$ is finite, $[M_{I}(\mathbb{C})\otimes_{p}\ell^{1}(G)]\oplus_{1}\mathbb{C}$ has an identity. So $\ell^{1}(S)$ is unital. Clearly  $\ell^{1}(S)$ has an ultra central approximate identity.

\end{proof}
%------------------------------------------------------------------------------------------------------------------------------------------
\begin{small}

\end{small}

\end{document}